\documentclass[reqno]{amsart}

\usepackage[author-year]{amsrefs}

\RequirePackage[OT1]{fontenc}
\RequirePackage{amsthm,amsmath}
\RequirePackage[colorlinks,citecolor=blue,urlcolor=blue,breaklinks=true]{hyperref}

\usepackage{amssymb}
\usepackage{verbatim}
\usepackage{graphicx}\graphicspath{{figures/}}
\usepackage{multicol}
\usepackage{tabularx}
\usepackage{mathrsfs} 
\usepackage{datetime}
\usepackage{dsfont}
\usepackage{bm}
\usepackage{enumitem}

\usepackage[capitalize]{cleveref}
\usepackage{mathtools}
\usepackage{thmtools}
\usepackage{color}

\def\[#1\]{\begin{align}#1\end{align}}
\newcommand{\defas}{\vcentcolon=}  
\newcommand{\given}{\mid}

\newcommand{\Reals}{\mathbb{R}}

\newcommand{\as}{\textrm{a.s.}}

\newcommand{\dee}{\mathrm{d}}

\DeclareMathOperator{\supp}{supp}

\DeclareMathOperator*{\newlim}{\mathrm{lim}\vphantom{\mathrm{infsup}}}

\DeclareMathOperator*{\newinf}{\mathrm{inf}\vphantom{\mathrm{infsup}}}

\renewcommand{\lim}{\newlim}

\renewcommand{\inf}{\newinf}

\newcommand{\Nats}{\mathbb{N}}
\newcommand{\Ints}{\mathbb{Z}}
\newcommand{\NNReals}{\Reals_+}
\newcommand{\NNInts}{\Ints_+}

\renewcommand{\Pr}{\mathbb{P}}
\newcommand{\defn}[1]{\emph{#1}}

\newcommand{\ceiling}[1]{\lceil #1 \rceil}

\def\EE{\mathbb{E}}

\newcommand{\Uniform}{\textrm U (0,1)}

\newcommand{\dist}{\ \sim\ }

\newcommand{\bspace}{\Omega}
\newcommand{\bsa}{\mathcal A}
\newcommand{\borelspace}{(\bspace,\bsa)}

\newcommand{\BPLAW}{\mathrm{BP}}

\newcommand{\distind}{\overset{ind}{\dist}}
\newcommand{\iid}{i.i.d.}
\newcommand{\pmf}{p.m.f.}

\newcommand{\nprocess}[3]{(#1_{#3})_{#3 \in #2}}
\newcommand{\process}[2]{\nprocess{#1}{#2}n}

\newcommand{\Atoms}{\mathscr{A}}
\newcommand{\theset}[1]{\lbrace #1 \rbrace}

\newcommand{\BM}{B_0} 
\newcommand{\NABM}{\tilde B_0}

\DeclareMathOperator{\betadist}{beta}

\DeclareMathOperator{\geodist}{geometric}
\DeclareMathOperator{\berndist}{Bernoulli}
\DeclareMathOperator{\nbdist}{NB}

\DeclareMathOperator{\nbfactory}{NB-factory}

\DeclareMathOperator{\BEPLAW}{BeP}
\DeclareMathOperator{\NBPLAW}{NBP}

\newcommand{\bb}{\bar b} 

\newcommand{\Fcal}{\mathcal{F}}

\newcommand{\gprocess}[2]{(#1)_{#2}}

\newcommand{\fixedvar}{\vartheta}
\newcommand{\arbmeas}{\xi}

\newcommand{\RBM}{B}
\newcommand{\Yscr}{\mathscr Y}
\newcommand{\Hscr}{\mathscr H}
\newcommand{\NRBM}{B_0}

\newcommand{\BMspace}{\mathcal M_0 \borelspace}

\numberwithin{equation}{section}
\theoremstyle{plain}
\newtheorem{thm}{Theorem}[section]
\newtheorem{lem}{Lemma}[section]
\newtheorem{prop}{Proposition}[section]
\newtheorem{definition}{Definition}[section]
\newtheorem{algorithm}{Algorithm}[section]

\begin{document}

\title[Black-box multisets]{Black-box constructions for exchangeable sequences of random multisets}

\author{Creighton Heaukulani}
\address{University of Cambridge\\
Cambridge, UK}
\email{c.k.heaukulani@gmail.com} 
\thanks{CH was supported by the Stephen Thomas studentship at Queens' College, Cambridge, with funding also from the Cambridge Trusts.}

 \author{Daniel M. Roy}
 \address{University of Toronto\\
 Toronto, Canada}
 \email{droy@utstat.toronto.edu}
 \thanks{DMR carried out this research while a research fellow of Emmanuel College, Cambridge, with funding also from a Newton International Fellowship through the Royal Society.}

\date{}

\begin{abstract}
We develop constructions for exchangeable sequences of point processes that are rendered conditionally-\iid\ negative binomial processes by a (possibly unknown) random measure called the \emph{base measure}.
Negative binomial processes are useful in Bayesian nonparametrics as models for random multisets, and in applications we are often interested in cases when the base measure itself is difficult to construct (for example when it has countably infinite support).
While a finitary construction for an important case (corresponding to a beta process base measure) has appeared in the literature, our constructions generalize to \emph{any} random base measure, requiring only an exchangeable sequence of Bernoulli processes rendered conditionally-\iid\ by the same underlying random base measure.
Because finitary constructions for such Bernoulli processes are known for several different classes of random base measures -- including generalizations of the beta process and hierarchies thereof -- our results immediately provide constructions for negative binomial processes with a random base measure from any member of these classes.
\end{abstract}

\maketitle


\section{Introduction}

A multiset is a set with possible repetitions of its elements.
A popular class of models for \emph{random multisets} in Bayesian nonparametric applications are the \defn{negative binomial processes}, which have been applied as topic models in document analysis and as latent factor models for image segmentation and object detection in computer vision, among other applications \cites{HR2014,BMPJpre,ZHDC12}.
In this article, we study exchangeable sequences $\process X \Nats \defas (X_1, X_2, \dots)$ of point processes on a measurable space that are rendered conditionally-\iid\ by a random measure $B$ called the \emph{base measure}.
Borrowing language from the theory of exchangeable sequences, we say that $B$ \defn{directs} the exchangeable sequence $\process X \Nats$.
Unconditionally, the measures $\process X \Nats$ will in general not be negative binomial processes, and we therefore refer to $\process X \Nats$ as an \emph{exchangeable sequence of multisets directed by $B$}.

In this work, we present algorithms to construct $\process X \Nats$ from any exchangeable sequence $\process Y \Nats \defas (Y_1, Y_2, \dots)$ of \emph{Bernoulli processes} directed by $B$.
(We review Bernoulli processes in \cref{sec:prelims}.)
So long as the total mass of $B$ is almost surely (\as) finite,
our constructions are also \emph{finitary}. 
That is, even if the support of $B$ is \as\ infinite, our construction of each $X_n$ is, with probability one, entirely determined by the finite set of atoms in the support of some prefix of $\process Y \Nats$.
In particular, our construction makes no direct use of $B$ and so $B$ need not even be represented explicitly.
Such constructions are useful for several reasons, notably:
\begin{enumerate}
\item In Bayesian nonparametric applications, we are interested in cases when $B$ has a countably infinite set of atoms, e.g., in the popular \emph{beta process} \cites{Hjort1990,TJ2007}.

\item Different models may be imposed on the base measure $B$ for various applications, e.g., generalizations of the beta process \cites{TG2009,Roy13CUP,HR14Gibbs} and hierarchies thereof \cites{TJ2007,Roy13CUP}, in which case it is convenient to have a black-box method.
\end{enumerate}
For the case when $B$ is a beta process (a precise definition is given in \cref{sec:mainresult}), a finitary construction for $\process X \Nats$ was given by \ocite{HR2014} (as well as by \ocite{ZMPS2016} for a reparameterization of the beta process), which takes advantage of conjugacy between beta processes and negative binomial processes \cites{BMPJpre,ZHDC12,Hjort1990,Kim1999}.
However, this approach does not generalize easily to other classes of base measures.
Therefore, instead of tailoring constructions to different cases, our approach provides a black-box method to construct $\process X \Nats$, assuming only that we have access to some exchangeable sequence $\process Y \Nats$ of Bernoulli processes directed by $B$.

Finitary constructions for exchangeable sequences $\process Y \Nats$ of Bernoulli processes 
are known for several different classes of directing random base measures.
For example, when $B$ is a beta process, one finitary construction for $\process Y \Nats$ is provided by the \defn{Indian buffet process} (IBP) \cites{GG06,GGS2007}.
\ocite{TG2009} generalized the beta process to the \defn{stable beta process} and provided a finitary construction for $\process Y \Nats$ in this case by generalizing the IBP to the \defn{stable IBP} (studied further by \ocite{BJP2012}), which was shown to exhibit power-law behavior in latent feature modeling applications.
\ocite{Roy13CUP} provided a further generalization to a large class of random base measures called \defn{generalized beta processes}, along with a corresponding generalization of the IBP.
A special subclass called \emph{Gibbs-type beta processes} (corresponding to a \emph{Gibbs-type IBP}) was studied by \ocite{HR14Gibbs}, which broadened the profile of attainable power-law behaviors beyond those achieved with the stable IBP.

Another useful modeling paradigm is obtained by organizing random base measures into hierarchies (see \ocite{TJ2007} for the prototypical example).
Such random base measures are useful in admixture or mixed-membership models, where there is latent structure shared between several distinct groups of data.
\ocite{Roy13CUP} provided a finitary construction for $\process Y \Nats$ directed by a hierarchy of generalized beta processes, which, as discussed, includes hierarchies of all previously mentioned random base measures as special cases.
In \cref{sec:mainresult}, we will illustrate the application of our construction when the directing random measure is a hierarchy of beta processes.

Finally, we note that alternative methods to construct $\process X \Nats$ directed by random base measures with a countably infinite number of atoms may be obtained with \emph{stick-breaking constructions} \cites{TGG07,PZWGC2010} or \emph{inverse L\'evy measure methods} \cite{WI1998}.
These constructions truncate the number of atoms in the underlying base measure and are therefore not exact, so Markov chain Monte Carlo (MCMC) techniques need to be introduced in order to remove this error, as in \ocites{BMPJpre,ZHDC12}.
Again, these approaches must be tailored to each specific case and are only accessible if such alternative representations for the random base measure exist.
Moreover, the representation is only exact in the asymptotic regime of the Markov chain.
Our approach is to instead avoid representing the underlying random base measure altogether, which has practical benefits (in addition to it being a black-box method) as MCMC subroutines need not be implemented for the simulation of $\process X \Nats$.

The remainder of the article is organized as follows.
We provide background and formally define notation in \cref{sec:prelims}.
In \cref{sec:mainresult}, we present our black-box construction in the case when the parameter $r$ (of the law of the negative binomial process) is an integer, which takes an intuitive approach.
We conclude in \cref{sec:fractional} by applying a rejection sampling subroutine in order to generalize our constructions to any parameter $r>0$.

\section{Notation and background}
\label{sec:prelims}

%
The focus of this article is on exchangeable sequences of random multisets and their de Finetti (mixing) measures.
Let $\bspace$ be a complete, separable metric space equipped with its Borel $\sigma$-algebra $\bsa$ and let $\NNInts \defas \{0, 1, 2, \dotsc \}$ denote the non-negative integers.  We represent multisets of $\bspace$ by $\NNInts$-valued random measures. In particular, by a \defn{point process}, we will mean a random measure $X$ on $\borelspace$ such that $X(A)$ is a $\NNInts$-valued random variable for every $A \in \bsa$.
Because $\borelspace$ is Borel, we may write $X = \sum_{k\le \kappa}\delta_{\gamma_k}$ for some random elements $\kappa$ in $\NNInts \cup \{\infty\}$ and (not necessarily distinct) $\gamma_1, \gamma_2, \dotsc$ in $\bspace$.
We will take $X$ to represent the \emph{multiset} of its unique elements $\gamma_k$ with corresponding multiplicities $X \theset{\gamma_k}$.
%

\subsection{Completely random measures}

We build on the theory of \emph{completely random measures} \cite{Kallenberg2002}*{Ch.~12}; \cite{Kingman1967}. 
Recall that every completely random measure $\arbmeas$ can be written as a sum of three independent parts
\[
\arbmeas = \tilde \arbmeas +
	\sum_{s\in\Atoms} \fixedvar_s \delta_s
	+ \sum_{(s,p) \in \eta} p\, \delta_s 
	\qquad \as,
	\label{eq:general_CRM}
\]
called the \defn{diffuse}, \defn{fixed}, and \defn{ordinary} components, respectively, where:
\begin{enumerate}
\item $\tilde \arbmeas$ is a non-random, non-atomic measure;

\item $\Atoms \subseteq \bspace$ is a non-random countable set whose elements are referred to as the 
\defn{fixed atoms} and whose masses $\fixedvar_1, \fixedvar_2, \dotsc$ are independent random variables in $\NNReals$ (the non-negative real numbers); 

\item $\eta$ is a Poisson process on $\bspace \times (0,\infty)$ whose intensity measure $\EE \eta$ is $\sigma$-finite and has diffuse projections onto $\bspace$, i.e., the measure $(\EE \eta) (\,\cdot\, \times (0,\infty))$ on $\bspace$ is non-atomic.
\end{enumerate}
%

\subsection{Base measures}

Let $\BMspace$ denote the space of $\sigma$-finite measures on $\borelspace$ whose atoms have measure less than one\footnote{Equipped with the $\sigma$-algebra generated by the projection maps $\mu \mapsto \mu(A)$, for all $A\in\bsa$.}.
Elements in $\BMspace$ are called \emph{base measures}.
For the remainder of the article, fix a base measure $\BM$ in $\BMspace$ given by
\[ 
\label{eq:base_meas}
\BM = \NABM + \sum_{s\in\Atoms} \bb_s \delta_s
\]
for some non-atomic measure $\NABM$;
a countable set $\Atoms \subseteq \bspace$;
and constants $\bb_1, \bb_2, \dotsc$ in $(0,1]$.
%

\subsection{Negative binomial processes}

We say that a random variable $Z$ in $\NNInts$ has a \defn{negative binomial distribution with parameters $r>0$ and $p\in(0,1)$}, written $Z \dist \nbdist(r,p)$, if its probability mass function (\pmf) is given by
\[
\Pr \{Z = k\} = \frac{ (r)_k }{ k! } p^k (1-p)^r
	,
	\qquad k = 0, 1, \dotsc,
\]
where $(a)_n \defas a(a+1)\cdots (a+n-1) = \Gamma(a + n) / \Gamma(a)$ denotes the $n$-th rising factorial (and its analytic continuation).
\begin{definition}[negative binomial process]
We call a point process $X$ on $\borelspace$ a \defn{negative binomial process with parameter $r>0$ and base measure $\BM$}, written $X \dist \NBPLAW(r, \BM)$, if it is purely atomic and completely random with fixed component
\[
\sum_{s\in\Atoms} \fixedvar_s \delta_s , 
        \qquad \fixedvar_s \distind \nbdist ( r, \bb_s )
        ;
\]
and with an ordinary component that has intensity measure
\[
(\dee s, \dee p) \mapsto r\, \delta_1(\dee p)\, \NABM(\dee s) .
\]
\end{definition}
The fixed component of this process was originally defined in \ocites{BMPJpre,ZHDC12}, and by \ocite{ThibauxThesis} for the case when $r=1$, corresponding to a \defn{geometric process}.
The ordinary component was additionally specified in \ocite{HR2014}, which we note is simply a Poisson (point) process on $\bspace$ with intensity measure $r\NABM$, and in \cref{sec:mainresult} we will see that this specification is natural.

We may alternatively characterize the law of a negative binomial process with its Laplace functional; the following may be verified with an application of the L\'evy-Khinchin theorem (see \ocite{HR2014}*{Sec.~2.2}).
\begin{prop}
Let $X \dist \NBPLAW(r, \BM)$. The Laplace functional of the law of $X$ is given by
\[
\EE [ e^{-X(f)} ]
	= 
	\exp \left [ 
			- \int \left ( 1 - e^{- f(s)} \right )  r \NABM ( \dee s ) 
		\right ]
	\prod_{s\in\Atoms} \biggl [ 
		\frac{ 1- \BM\theset s } { 1 - \BM\theset s e^{-f(s)} } 
		\biggr ]^r
		,
		\label{result:cf_nbp}
\]
for every measurable function $f \colon \bspace \to \NNReals$, where $X (f) \defas \int f(x) X (\dee x)$. 
\end{prop}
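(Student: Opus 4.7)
The plan is to exploit the fact that, as specified in the definition, $X$ is purely atomic and completely random with no diffuse part, so that $X$ decomposes as a sum of two independent pieces: the fixed component $X^{\mathrm{f}} \defas \sum_{s\in\Atoms} \fixedvar_s \delta_s$ with $\fixedvar_s \distind \nbdist(r,\bb_s)$, and the ordinary component $X^{\mathrm{o}}$ which is the image under the map $(s,p)\mapsto p\,\delta_s$ of a Poisson process on $\bspace\times(0,\infty)$ of intensity $r\,\delta_1(\dee p)\,\NABM(\dee s)$. By independence,
\[
\EE[e^{-X(f)}] = \EE[e^{-X^{\mathrm{f}}(f)}] \cdot \EE[e^{-X^{\mathrm{o}}(f)}],
\]
so it suffices to evaluate each factor and match them with the two factors on the right-hand side of \eqref{result:cf_nbp}. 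This avoids invoking the full L\'evy-Khinchin machinery; I would mention that alternative in passing.

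For the fixed component, I would compute the Laplace transform of a single $\nbdist(r,p)$ random variable at $t\ge 0$ using the generalized binomial series $\sum_{k\ge 0}\frac{(r)_k}{k!}x^k = (1-x)^{-r}$, valid for $|x|<1$:
\[
\EE[e^{-tZ}] = (1-p)^r \sum_{k\ge 0}\frac{(r)_k}{k!}(pe^{-t})^k = \Bigl(\frac{1-p}{1-pe^{-t}}\Bigr)^r.
\]
Because the $\fixedvar_s$ are independent, setting $t=f(s)$ and taking the product over $s\in\Atoms$ produces exactly the product factor in \eqref{result:cf_nbp}. A brief remark on why swapping the expectation and the product is legitimate (monotone convergence applied to a finite truncation of $\Atoms$) covers the case when $\Atoms$ is countably infinite.

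For the ordinary component, I would write $X^{\mathrm{o}}(f) = \int f(s)\,p\,\eta(\dee s,\dee p)$ and apply the standard Laplace functional for a Poisson process (Campbell's formula): if $\eta$ has intensity $\mu$ on $\bspace\times(0,\infty)$, then
\[
\EE\bigl[\exp(-X^{\mathrm{o}}(f))\bigr] = \exp\!\Bigl[-\int\!\!\int \bigl(1 - e^{-p\,f(s)}\bigr)\,\mu(\dee s,\dee p)\Bigr].
\]
Substituting $\mu(\dee s,\dee p) = r\,\delta_1(\dee p)\,\NABM(\dee s)$ collapses the inner integral in $p$ and yields the exponential factor in \eqref{result:cf_nbp}. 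Multiplying the two factors gives the claim.

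The only place requiring any care is justifying the interchange of expectation and infinite product over $\Atoms$, and verifying Campbell's formula applies here (i.e., that $(1-e^{-pf(s)})\mu$ is $\sigma$-finite and integrable against $\mu$ or at least that the formula holds in the generalized sense allowing $+\infty$). For measurable $f\ge 0$ with $\int(1-e^{-f})\NABM<\infty$ these are routine; for general measurable $f\ge 0$ both sides may be $0$ or extended real-valued simultaneously, and the identity still holds. I do not anticipate any genuine obstacle beyond bookkeeping.
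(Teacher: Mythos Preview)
Your proposal is correct and is essentially a spelled-out version of what the paper does: the paper simply states that the formula ``may be verified with an application of the L\'evy--Khinchin theorem'' and gives a reference, without writing out a proof. Your decomposition into the independent fixed and ordinary components, followed by the negative-binomial Laplace transform and Campbell's formula for the Poisson part, is exactly the content of the L\'evy--Khinchin representation specialized to this setting, so the two approaches coincide.
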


\subsection{Bernoulli processes}

As mentioned in the introduction, our algorithms require an exchangeable sequence of \emph{Bernoulli processes}, a class of completely random measures defined in this context by \ocites{Hjort1990,TJ2007}, though it should not be confused with the classic Bernoulli process studied in statistics and probability.
\begin{definition}[Bernoulli process]
We call a point process $X$ on $\borelspace$ a \defn{Bernoulli process with base measure $\BM$}, written $X\dist \BEPLAW(\BM)$, if it is purely atomic and completely random with fixed component
\[
\sum_{s\in\Atoms} \fixedvar_s \delta_s ,
	\qquad \fixedvar_s \distind \berndist(\bb_s)
	;
\]
and with an ordinary component that has intensity measure
\[
(\dee s, \dee p) \mapsto \delta_1(\dee p)\, \NABM(\dee s)
	.
\]
\end{definition}
Note that the ordinary component here is a Poisson process on $\bspace$ with intensity measure $\NABM$.
Also note that the Bernoulli process is \as\ \emph{simple} (i.e., has unit-valued atomic masses) and finite.
%

\subsection{Summary}

We now summarize the article more formally: our results provide an algorithm parameterized by some $r>0$ and takes as input an exchangeable sequence $\process Y \Nats$ of simple point processes on $\borelspace$, and outputs a sequence $\process X \Nats$ of point processes on $\borelspace$.
If $\process Y \Nats$ satisfies
\[
Y_n \given B \dist \BEPLAW(B)
	,
	\qquad n\in\Nats
	,
\]
for some random element $B$ in $\BMspace$, then $\process X \Nats$ satisfies 
\[
X_n \given B \dist \NBPLAW(r,B)
	,
	\qquad n \in \Nats
	.
\]
Importantly, we need not explicitly represent $B$, and note in particular that other than being $\sigma$-finite, the random base measure $B$ may be arbitrary: it need not be purely atomic nor completely random.

\section{A negative binomial urn scheme}
\label{sec:mainresult}

We now present our main construction for the case when $r$ is a positive integer, followed by a few demonstrative examples.
Let $r\in \Nats$ and let $\gprocess{Y_{n,m}}{n,m \in\Nats}$ be an array of simple point processes on $\borelspace$.
For every $n\in\Nats$,
\begin{enumerate}[leftmargin=*]
\item Define
\[
\Yscr_n 
	\defas \bigcup_{\ell\le r} \supp(Y_{n,\ell})
	,
	\label{eq:NBUS_sets}
\]
where $\supp(A)$ denotes the support of $A$;

\item We may write $\Yscr_n = \theset{ \gamma_{n,1}, \dotsc, \gamma_{n,\kappa_n} }$
for some random element $\kappa_n$ in $\NNInts$ and \as\ unique random elements $\gamma_{n,1}, \gamma_{n,2}, \dotsc$ in $\bspace$;

\item Define a sequence $\process X \Nats$ of point processes on $\borelspace$ where,
for every $j\in\Nats$,
\[
X_n \theset {\gamma_{n,j}} 
	\defas 	
		\inf \Bigl \{ 
		m \in \NNInts \colon m = \sum_{\ell=1}^{m+r} Y_{n,\ell}\theset {\gamma_{n,j} } 
		\Bigr \}
		,
		\;
		\text{on the event } \{ j \le \kappa_n \}
		,
	\label{eq:NBUS_construct}
\]
and $X_n \theset {\gamma_{n,j}} \defas 0$ otherwise;

\item For every ${A\in\bsa}$, put
$
X_n (A) \defas \sum_{s\in A \cap \Yscr_n} X_n \theset s.
$
These definitions imply that the measure $X_n$ is \as\ concentrated on a subset of $\Yscr_n$, i.e., ${X_n ( \bspace \setminus \Yscr_n ) = 0}$ \as
\end{enumerate}

\begin{definition}[negative binomial urn scheme] \label{defn:NBUS}
We call $\process X \Nats$ a \defn{negative binomial urn scheme induced by $\gprocess{Y_{n,m}}{n,m \in\Nats}$ with parameter $r$}.
\end{definition}
For intuition, if we think of $Y_{n,1} \theset \gamma, Y_{n,2} \theset \gamma, \dots$ in \cref{eq:NBUS_construct} as a sequence of independent Bernoulli trials each with unknown success probability, then $X_n\theset \gamma$ simply counts the number of successes in the sequence before $r$ failures, i.e., it has a negative binomial distribution.
The construction of negative binomial variates from Bernoulli variates is central to the article (see \cref{result:nb_construct} for a precise algorithm) and the following result may be thought of as an infinite dimensional analogue of this construction.

\begin{thm} \label{result:intro_arbH}
Let $r\in\Nats$, let $B$ be a random element in $\BMspace$, and let $\gprocess{Y_{n,m}}{n,m \in\Nats}$ be an exchangeable array of Bernoulli processes directed by $B$.
Let $\process X \Nats$ be a negative binomial urn scheme induced by $\gprocess{Y_{n,m}}{n,m \in\Nats}$ with parameter $r$.
Then, conditioned on $B$, the $\process X \Nats$ are \iid\ negative binomial processes with parameter $r$ and base measure $B$.
\end{thm}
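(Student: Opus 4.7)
The plan is to condition on $B$ and verify that, for each $n$, $X_n \given B \dist \NBPLAW(r, B)$; the conditional \iid\ property across $n$ then follows because the hypothesis that the array $\gprocess{Y_{n,m}}{n,m\in\Nats}$ is directed by $B$ gives that the rows are conditionally \iid\ across $n$, and $X_n$ is a measurable functional of the $n$-th row alone. Writing $B = \NABM + \sum_{s\in\Atoms}\bb_s\delta_s$ as in \cref{eq:base_meas}, I would decompose each $Y_{n,\ell}$ into its fixed and ordinary parts $Y_{n,\ell}^{\mathrm{fix}} + Y_{n,\ell}^{\mathrm{ord}}$, which given $B$ are mutually independent across $\ell$ and between the two components, with $Y_{n,\ell}^{\mathrm{fix}}$ carrying an independent $\berndist(\bb_s)$ mass at each $s\in\Atoms$ and $Y_{n,\ell}^{\mathrm{ord}}$ being a Poisson process on $\bspace$ with intensity $\NABM$. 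This induces a corresponding decomposition of $X_n$ that I would analyze piece by piece.

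For each fixed atom $s \in \Atoms$, the sequence $(Y_{n,\ell}\theset s)_{\ell\ge 1}$ is \iid\ $\berndist(\bb_s)$ given $B$, and \cref{eq:NBUS_construct} selects the smallest $m \ge 0$ such that among the first $m+r$ trials there are exactly $m$ successes, i.e., exactly $r$ failures. This is precisely the classical negative binomial waiting-time random variable, so $X_n\theset s \given B \dist \nbdist(r, \bb_s)$, and these variables are conditionally independent across $s \in \Atoms$ because the fixed-atom masses of the $Y_{n,\ell}$ are. The ordinary component requires more care: because $\NABM$ is non-atomic, for any deterministic point $\gamma \in \bspace$ one has $\NABM\theset\gamma = 0$, so the probability that a Poisson process of intensity $\NABM$ has an atom at $\gamma$ vanishes. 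Enumerating the \as\ countable set $\bigcup_{\ell\le r}\supp(Y_{n,\ell}^{\mathrm{ord}})$ and applying this fact countably often after conditioning on $B, Y_{n,1},\dots,Y_{n,r}$, I would conclude that \as\ no point of that set is an atom of any $Y_{n,\ell'}$ for $\ell' > r$, nor does it coincide with an element of $\Atoms$. Hence at every such $\gamma$ the infimum in \cref{eq:NBUS_construct} is attained at $m = 1$ (the sum over $\ell = 1,\dots, r+1$ being exactly $1$), so the ordinary component of $X_n$ is a unit-mass point process supported on the superposition of $r$ independent Poisson processes of intensity $\NABM$, which is itself Poisson of intensity $r\NABM$.

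Combining, $X_n \given B$ is purely atomic and completely random with fixed component $\sum_{s\in\Atoms}\fixedvar_s\delta_s$ where $\fixedvar_s\distind \nbdist(r,\bb_s)$, and with an ordinary component of intensity $(\dee s,\dee p) \mapsto r\,\delta_1(\dee p)\,\NABM(\dee s)$, which by definition is $\NBPLAW(r, B)$; alternatively, one may verify this at the level of the conditional Laplace functional and match it to the expression in the proposition preceding \cref{result:intro_arbH}. The main obstacle is the ordinary-component step: the defining equation \cref{eq:NBUS_construct} involves \emph{every} $Y_{n,\ell}$ for $\ell\in\Nats$, so one must rigorously rule out the possibility that a later ordinary atom coincides with one from the first $r$, and must also disentangle ordinary atoms from fixed ones; the non-atomicity of $\NABM$ is what makes both possible, via a countable union of null events.
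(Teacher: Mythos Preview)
Your proposal is correct and follows essentially the same route as the paper: condition on $B$, handle the fixed atoms via the classical waiting-time description of the negative binomial (the paper isolates this as \cref{result:nb_construct}), and handle the ordinary component by observing that the $r$ independent Poisson processes with non-atomic intensity $\NABM$ have \as\ disjoint supports (and share no atoms with any $Y_{n,\ell'}$, $\ell'>r$, or with $\Atoms$), so the ordinary part of $X_n$ is their superposition, a Poisson process of intensity $r\NABM$. The only notable difference is presentational: the paper makes the passage from random $B$ to a deterministic base measure explicit via the disintegration theorem and then verifies the identification at the level of the Laplace functional (matching \cref{result:cf_nbp}), whereas you work directly ``given $B$'' and match the definition of $\NBPLAW(r,B)$; these are the same argument at different levels of formality.
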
 

\begin{proof} 
It is clear that the random measures $X_1, X_2,\dotsc$ are conditionally independent given $\RBM$.  Fix $n\in\Nats$.  We must show that $X_n \given \RBM \dist \NBPLAW(r,\RBM)$.
Let $f \colon \bspace \to \NNReals$ be a measurable function 
and recall the notation $X (f) \defas \int f(x) X(\dee x)$.
We have
\[
   \EE[ \exp( -X_n (f) ) \given \RBM]  = g(\RBM)
   \label{eq:gdef}
\]
for some measurable function $g \colon \BMspace \to \NNReals$. 
Let $Y_n \defas \gprocess {Y_{n,m}}{m \in \Nats}$, and note that $X_n$ is $\sigma(Y_n)$-measurable, so there exists a measurable function $h$ such that $h(Y_n) = \exp ( - X_n (f) )$ \as. 
We have $Y_n \given B \dist ( \BEPLAW(B) )^\infty$, where the right-hand side denotes the infinite dimensional product measure, 
and so by the disintegration theorem \cite{Kallenberg2002}*{Thm.~6.4} we have
\[
  g(\RBM) = \int h \, \dee ( \BEPLAW (\RBM) )^\infty 
  	,
	\quad \as
	\label{eq:disintegration}
\]
We may therefore characterize $g$ using the structure of $h$ (i.e., without regard to the random base measure $B$), and if we show that it has the form of the Laplace functional of the (law of the) negative binomial process, then \cref{eq:disintegration} extends the result to the randomization in \cref{eq:gdef}.

Let $\RBM = \NRBM$ \as\ for some non-random measure $\NRBM \in \BMspace$ whose set of atoms we denote by $\Hscr_0$. 
We have that $\gprocess{Y_{n,m}\theset s}{m \in \Nats, s\in\Hscr_0}$ are independent random variables and 
\[
Y_{n,m} \theset s \dist \berndist ( \NRBM\theset s ) ,
	\qquad m \in \Nats, \ s\in\Hscr_0 .
\]
Then $\gprocess{X_n\theset s}{s\in\Hscr_0}$ are independent random variables constructed as in \cref{eq:NBUS_construct}, and by \cref{result:nb_construct},
\[
X_n\theset s \dist \nbdist( r, \NRBM \theset s ) ,
	\qquad s\in \Hscr_0
	.
	\label{eq:nbfixed}
\]

Let $\tilde Y_{n,m} \defas Y_{n,m}(\, \cdot \setminus \Hscr_0)$, for every $m\in\Nats$, be the restrictions of the Bernoulli processes to their ordinary components (which we recall are independent from the fixed components).
Then the $\gprocess{\tilde Y_{n,m}}{m\in\Nats}$ are independent Poisson processes (independent also from $\gprocess {X_n \theset s}{s \in \Hscr_0}$), each with intensity measure $\tilde \NRBM \defas \NRBM( \, \cdot \setminus \Hscr_0)$.
It follows that $\bigcap_{m\in\Nats} \supp(\tilde Y_{n,m}) = \emptyset$ \as\
Next, let $\tilde X_n \defas X_n ( \, \cdot \setminus \Hscr_0)$; 
it is straightforward to verify that $\tilde X_n = \sum_{j \le r} \tilde Y_{n,j}$ \as, and so $\tilde X_n$ is a Poisson process with intensity measure $r \tilde \NRBM$.
Then
\[
\begin{split} \label{eq:cf_Xn}
g(\NRBM) &= \EE [ e^{-X_n (f)} ]
	\\
	&= \EE \Bigl [ e^{- \tilde X_n (f) } \Bigr ] \times \EE \Bigl [ e^{- (X_n - \tilde X_n) (f) } \Bigr ]
	\\
	&= 
	\exp \Bigl [
		- \int ( 1 - e^{-f(s)} ) r \tilde \NRBM (\dee s)
	\Bigr ]
	 \times
  	\prod_{s\in \Hscr_0} \Bigl [
		\frac{ 1-\NRBM\theset s }{ 1-\NRBM\theset s e^{-f(s)} }
		\Bigr ]^r
	,
\end{split}
\]
where the factors in the second term of the last line are obtained from the Laplace transform of the negative binomial distribution.
This is the Laplace functional of the law of the negative binomial process with parameter $r$ and base measure $\NRBM$, as desired.
\end{proof}

We now provide two illustrative examples: when the directing measure is (1) the beta process \cite{Hjort1990} and (2) a hierarchy of beta processes \cite{TJ2007}.
Both of these random base measures are purely atomic and completely random, however, we note that in general the directing measure in \cref{result:intro_arbH} may have a diffuse component or may not even be completely random.

For the remainder of the section, let $c\colon \bspace \to \NNReals$ be a non-negative measurable function, which we call a \emph{concentration function}.
\begin{definition}[beta process]
We call a random base measure $B$ in $\BMspace$ a \defn{beta process with concentration function $c$ and base measure $\BM$} and we write $B\dist \BPLAW(c,\BM)$, if it is purely atomic and completely random with fixed component
\[
\sum_{s\in\Atoms} \fixedvar_s \delta_s ,
	\qquad \fixedvar_s \distind \betadist( c(s) \bb_s , c(s) ( 1- \bb_s ) )
	,
\]
and with an ordinary component that has intensity measure
\[
(\dee s, \dee p) \mapsto c(s) p^{-1} (1-p)^{c(s)-1} \NABM ( \dee s )
	.
	\label{eq:bpord}
\]
\end{definition}

Because the measure in \cref{eq:bpord} is not finite, the beta process has an infinite number of atoms \as\
However, consider the following construction:
Let $\process Y \Nats$ be a sequence of simple point processes on $\borelspace$, where ${Y_1 \dist \BEPLAW(\NABM)}$ and 
\[
Y_{n+1} \given Y_{[n]}
	\dist \BEPLAW \Bigl (
		\frac{c}{c+n} \NABM + \frac 1 {c+n} \sum_{j=1}^n Y_j
		\Bigr )
		,
		\qquad
		n\ge 1
		,
		\label{eq:IBP}
\]
where $Y_{[n]} \defas (Y_1, \dotsc, Y_n)$.
\ocite{TJ2007} showed that $\process Y \Nats$ is an exchangeable sequence of Bernoulli processes directed by a beta process $B\dist \BPLAW(c,\NABM)$, and that the \emph{combinatorial structure} of this sequence is in a sense described by the \emph{Indian buffet process} \cites{GG06,GGS2007}, which has found many uses in latent feature modeling applications \cite{GG2011}.
Passing the sequence $\process Y \Nats$ into the construction in \cref{result:intro_arbH}, we would obtain an exchangeable sequence of multisets directed by $B$, which is an alternative to the construction already provided for this case by \ocite{HR2014}.

Hierarchies of random base measures have also found many uses in Bayesian nonparametrics as admixture or mixed-membership models \cite{TJ2007}.
In particular, we call a random base measure $H$ in $\BMspace$ a \defn{hierarchy} of beta processes if there exists a beta process $B \dist \BPLAW(c,\BM)$ such that
\[
H \given B \dist \BPLAW(c, B)
	.
\]
\ocite{Roy13CUP} provides the following construction for an exchangeable sequence of Bernoulli processes directed by $H$, which takes as input the exchangeable sequence of Bernoulli process $\process Y \Nats$ directed by $B$ defined in \cref{eq:IBP}:
Let $\process W \Nats$ be a sequence of simple point processes on $\borelspace$ with $W_1 = Y_1$ \as\ and 
\[
W_{n+1} \given W_{[n]}, \ Y_{n+1}
        \dist \BEPLAW \biggl ( 
                \frac{c}{c + n} Y_{n+1} + \frac{1}{c + n} \sum_{j=1}^n W_j
        \biggr )
        ,
        \qquad
        n\ge 1.
\]

\begin{prop}[one-parameter process; \ocite{Roy13CUP}] \label{result:OPUS}
There exists an \as\ unique random element $H$ in $\BMspace$ such that, conditioned on $B$, $H$ is a beta process with concentration function $c$ and base measure $B$.
Furthermore, conditioned on $H$, the $\process W \Nats$ are \iid\ Bernoulli processes with base measure $H$.
\end{prop}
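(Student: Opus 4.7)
The plan is to condition on the beta process $B$ throughout: by the directing property noted after \cref{eq:IBP}, conditional on $B$, the $Y_n$ are \iid\ $\BEPLAW(B)$.

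The crux of the argument is to show that, after marginalizing out $Y_{n+1}$, the conditional law of $W_{n+1}$ given $W_{[n]}$ and $B$ coincides with the IBP predictive rule with $B$ in place of $\NABM$:
\[
W_{n+1} \given W_{[n]}, B
	\dist \BEPLAW\Bigl( \frac{c}{c+n} B + \frac{1}{c+n} \sum_{j=1}^n W_j \Bigr)
	.
	\label{eq:plan_marg}
\]
I would establish \cref{eq:plan_marg} by a location-by-location calculation. For any atom $s$ of $B$, we have $Y_{n+1}\theset s \given B \dist \berndist(B\theset s)$, and given $Y_{n+1}\theset s$, the quantity $W_{n+1}\theset s$ is Bernoulli with parameter $(c Y_{n+1}\theset s + \sum_{j\le n} W_j\theset s)/(c+n)$; marginalizing the former yields a Bernoulli with parameter $(c B\theset s + \sum_{j\le n} W_j\theset s)/(c+n)$, which is precisely the mass at $s$ of the base measure on the right-hand side. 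For any diffuse part of $B$, $Y_{n+1}$ contributes a Poisson process with intensity equal to that diffuse part, and independent Bernoulli thinning with probability $c/(c+n)$ produces a Poisson process of intensity $c/(c+n)$ times the diffuse part of $B$, matching the ordinary component on the right-hand side of \cref{eq:plan_marg}.

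Once \cref{eq:plan_marg} is established, the recursion is precisely the IBP rule of \cref{eq:IBP} with the random base measure $B$ in place of $\NABM$. Applying the directing result of \ocite{TJ2007} conditionally on $B$ (the extension from deterministic to random base measures in $\BMspace$ goes through by a disintegration argument analogous to \cref{eq:disintegration} in the proof of \cref{result:intro_arbH}), there exists a random element $H$ in $\BMspace$ with $H \given B \dist \BPLAW(c, B)$ such that conditional on $(H, B)$, the $W_n$ are \iid\ $\BEPLAW(H)$. Because this conditional law depends on $(H, B)$ only through $H$, we conclude that $W_n \given H$ are \iid\ $\BEPLAW(H)$. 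Almost sure uniqueness of $H$ follows from the a.s.\ uniqueness of the directing measure of the (conditionally on $B$) exchangeable sequence $\process W \Nats$.

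The main obstacle I anticipate is carrying out the marginalization in \cref{eq:plan_marg} cleanly while simultaneously tracking fixed atoms of $\BM$ present in $B$, the infinitely many random atoms of $B$'s ordinary component, and any diffuse part, together with the independence structure between $Y_{n+1}$ and $W_{[n]}$ given $B$ needed to justify the atom-wise computation. Once that is in place, the remaining steps are standard consequences of conditional de Finetti and disintegration.
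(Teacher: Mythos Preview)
The paper does not prove this proposition; it is stated as a result of \ocite{Roy13CUP} and no proof is given in the present paper. So there is no paper's own proof to compare against.

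That said, your outline is essentially the right argument and is in the spirit of how such results are proved. A few remarks:

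Your marginalization step is correct and is indeed the heart of the matter: given $B$, the process $Y_{n+1}$ is independent of $W_{[n]}$ (since $W_{[n]}$ is measurable with respect to $Y_1,\dotsc,Y_n$ and the auxiliary Bernoulli randomness, all independent of $Y_{n+1}$ given $B$), and linearity of expectation in the Bernoulli parameter immediately yields the predictive rule you claim.

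One simplification: in this specific proposition $B$ is a beta process and hence purely atomic \as, so the ``diffuse part'' case you worry about does not actually arise. All the work is at the atoms of $B$.

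The one place where your sketch is slightly imprecise is in invoking the Thibaux--Jordan directing result ``with $B$ in place of $\NABM$.'' The version stated in the paper at \cref{eq:IBP} has a \emph{non-atomic} base measure, whereas $B$ is purely atomic. The extension you need is therefore not merely ``deterministic to random'' via disintegration, but from non-atomic to atomic base measures. This is not hard --- at each atom of $B$ the predictive rule is exactly the P\'olya urn for a $\betadist(c(s) B\{s\}, c(s)(1-B\{s\}))$ prior, and independence across atoms (given $B$) lets you assemble the directing measure $H$ atom by atom --- but it is a genuine step beyond what is literally cited in \cref{eq:IBP}, and it is worth saying so explicitly rather than folding it into a disintegration remark.
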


Roy calls $\process W \Nats$ a \emph{one-parameter process induced by $\process Y \Nats$ with concentration function $c$}, and comparing \cref{result:OPUS} to \cref{result:intro_arbH}, we can think of the negative binomial urn scheme as a negative binomial extension of the one-parameter process.
The following construction for an exchangeable sequence of negative binomial processes directed by a hierarchy of beta processes follows straightforwardly from \cref{result:intro_arbH} and \cref{result:OPUS}:
\begin{thm} \label{result:nbusexample}
Let $\process W \Nats$ and $H$ be as in~\cref{result:OPUS},
and arbitrarily arrange $\process W \Nats$ into an array $\gprocess{W_{n,m}}{n,m\in\Nats}$.
Let $\process X \Nats$ be a negative binomial urn scheme induced by $\gprocess{W_{n,m}}{n,m\in \Nats}$ with parameter $r\in\Nats$.
Then, conditioned on $H$, the $\process X \Nats$ are \iid\ negative binomial processes with parameter $r$ and base measure $H$.
\end{thm}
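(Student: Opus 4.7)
The plan is to simply compose the two tools already on the table: \cref{result:OPUS} supplies an exchangeable sequence of Bernoulli processes directed by $H$, and \cref{result:intro_arbH} converts such a sequence (arranged into an array) into an exchangeable sequence of negative binomial processes directed by the same random base measure. All that really has to be checked is that the hypotheses of \cref{result:intro_arbH} are satisfied with $B\defas H$ and that reshaping a conditionally-\iid\ sequence into a doubly-indexed array preserves exchangeability in the sense that theorem requires.

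More concretely, I would proceed as follows. First, invoke \cref{result:OPUS} to conclude that there is an \as\ unique $H$ in $\BMspace$ with $H \given B \dist \BPLAW(c,B)$, and that conditional on $H$ the sequence $\process W \Nats$ is \iid\ with common distribution $\BEPLAW(H)$. In particular, $H$ is a random element of $\BMspace$, which is the only requirement imposed on the directing measure in \cref{result:intro_arbH}. Second, consider the array $\gprocess{W_{n,m}}{n,m\in\Nats}$ obtained by any fixed (measurable) bijection $\Nats \times \Nats \to \Nats$ applied to $\process W \Nats$. Conditional on $H$, the entries of this array are \iid\ $\BEPLAW(H)$ random point processes (being a relabelling of a conditionally \iid\ sequence). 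Hence, conditional on $H$, the row sequences $(W_{n,m})_{m\in\Nats}$ are themselves \iid, so the array is (row-)exchangeable and in fact directed by $H$ in exactly the sense required by \cref{result:intro_arbH}.

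With both hypotheses verified, I would then apply \cref{result:intro_arbH} with $B\defas H$ and parameter $r\in\Nats$ to the negative binomial urn scheme $\process X \Nats$ induced by $\gprocess{W_{n,m}}{n,m\in\Nats}$. The conclusion is precisely that, conditioned on $H$, the $\process X \Nats$ are \iid\ with common distribution $\NBPLAW(r,H)$, which is exactly the statement of \cref{result:nbusexample}.

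I do not anticipate any real obstacle: the only substantive point is the observation that $H$ is a legitimate ``black-box'' directing measure for the purposes of \cref{result:intro_arbH}, even though $H$ itself is defined only via a further hierarchical randomization through $B$. Since \cref{result:intro_arbH} makes no structural assumption on the directing measure beyond membership in $\BMspace$ (in particular, it need not be purely atomic or completely random), the hierarchical nature of $H$ is immaterial. Everything else is bookkeeping to pass from a sequence to an array and to invoke the previously established conditional independences.
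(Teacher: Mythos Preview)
Your proposal is correct and matches the paper's own treatment: the paper does not give a separate proof for \cref{result:nbusexample}, stating only that it ``follows straightforwardly from \cref{result:intro_arbH} and \cref{result:OPUS}.'' Your write-up is simply an explicit unpacking of that sentence, and the only nontrivial observation you make --- that reindexing a conditionally-\iid\ sequence into an array preserves the hypotheses of \cref{result:intro_arbH} --- is exactly the bookkeeping the paper leaves implicit.
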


It is straightforward to see that the one-parameter process can be repeatedly applied to produce an exchangeable sequence of Bernoulli processes (and thus negative binomial processes) directed by an arbitrarily deep hierarchy of beta processes.

As discussed in the introduction, \ocite{Roy13CUP} also defined a generalization of the beta process with a broad class of random base measures called \defn{generalized beta processes}, which contains the beta process, the \defn{stable beta process} \cites{TG2009,BJP2012}, and the \emph{Gibbs-type beta process} \cite{HR14Gibbs} as special cases.
A finitary construction for an exchangeable sequence of Bernoulli processes directed by a generalized beta process (as well as its hierarchies) is provided therein.  
Passing such a sequence through the negative binomial urn scheme as in \cref{result:nbusexample} immediately yields an exchangeable sequence of negative binomial processes directed by any member from among these subclasses, e.g., stable beta processes, hierarchies of stable beta processes, hierarchies of Gibbs-type beta processes, etc.

\section{Generalization to $r>0$}
\label{sec:fractional}

The negative binomial urn scheme in \cref{result:intro_arbH} is only valid for positive integer values of $r$, and we now generalize this construction to any positive parameter value $r>0$.
Recall that we require a method to simulate a negative binomial variate $X \dist \nbdist(r,p)$ given only an \iid\ sequence of \emph{$p$-coins}, i.e., an \iid\ sequence of Bernoulli variates $Z_1, Z_2, \dotsc \dist \berndist(p)$, where $p$ is unknown.
For integer $r$, we simply recorded the number of heads before $r$ tails in the sequence, accomplished by \cref{eq:NBUS_construct}.
However, for non-integer $r>0$, we require a different approach.
In the following algorithm, we propose a $\nbdist(\ceiling{r},p)$ variate (simulated with the $p$-coins), and accept or reject the proposal with a simple rejection sampler.
Recall that $(a)_n \defas a(a+1)\cdots (a+n-1) = \Gamma(a + n) / \Gamma(a)$ denotes the $n$-th rising factorial (and its analytic continuation).

\begin{algorithm} \label{alg:fractionalnb}
Let $r>0$ and let $Z_1, Z_2, \dotsc$ be an \iid\ sequence of $p$-coins.
Set $k\defas 1$.
\begin{enumerate}
\item Simulate $W_k \dist \nbdist(\ceiling{r},p)$ with $p$-coins.

\item Compute
$
A(W_k; r) 
	= (r)_{W_k} / (\ceiling{r})_{W_k}
.
$

\item Let $G_k \dist \Uniform$.

\begin{enumerate}
\item If $G_k<A(W_k;r)$, then set $X \defas W_k$;

\item If $G_k \ge A(W_k;r)$, then set $k\defas k+1$ and GO\! TO step 1.
\end{enumerate}
\item Output $X$.
\end{enumerate}
\end{algorithm}

\begin{lem} \label{result:fractionalnb}
\cref{alg:fractionalnb} outputs $X \dist \nbdist(r,p)$, and the expected number of iterations is
$
(1-p)^{\ceiling{r}-r}
	.
$
\end{lem}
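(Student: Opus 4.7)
The plan is to recognize \cref{alg:fractionalnb} as a standard rejection sampler with proposal distribution $\nbdist(\ceiling r, p)$, target distribution $\nbdist(r, p)$, and per-proposal acceptance probability $A(w; r) = (r)_w / (\ceiling r)_w$; the two claims of the lemma then follow from the usual rejection-sampling bookkeeping.

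First I would verify that $A(w; r) \in (0, 1]$ for every $w \in \NNInts$. Writing each rising factorial as a product, this reduces to the observation that $0 < r + i \le \ceiling r + i$ for every $0 \le i < w$, so $(r)_w \le (\ceiling r)_w$ termwise. I would also note that the proposal $W_k \dist \nbdist(\ceiling r, p)$ can indeed be drawn from the given $p$-coins alone via the integer-parameter construction in \cref{eq:NBUS_construct}, since $\ceiling r \in \Nats$; the algorithm is therefore genuinely implementable with only the supplied Bernoulli inputs.

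Next I would compute the distribution of the output. Because the proposals are \iid\ and each is independently tested against an independent $\Uniform$ variate, the output $X$ is distributed as $W_1$ conditioned on acceptance. A direct computation gives
\[
\Pr\{W_1 = w\}\, A(w; r) = \frac{(\ceiling r)_w}{w!}\, p^w (1-p)^{\ceiling r} \cdot \frac{(r)_w}{(\ceiling r)_w} = (1-p)^{\ceiling r - r} \cdot \frac{(r)_w}{w!}\, p^w (1-p)^r,
\]
and summing over $w \in \NNInts$, using that the $\nbdist(r, p)$ pmf is normalized, identifies the per-iteration acceptance probability as $q = (1-p)^{\ceiling r - r}$. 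Dividing the displayed equation by $q$ recovers the $\nbdist(r, p)$ pmf for $X$, establishing the distributional claim.

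Finally, since the iteration outcomes are independent Bernoulli($q$) trials, the index at which the sampler terminates is geometric with mean $1/q$, which yields the expected number of iterations. The only step with any substance is the bound $A(w; r) \le 1$, and even this is immediate from the product representation of the rising factorials; there is no real obstacle to the argument.
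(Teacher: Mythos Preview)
Your argument is correct and follows the same route as the paper's: both recognize \cref{alg:fractionalnb} as a rejection sampler with proposal $\nbdist(\ceiling r,p)$, target $\nbdist(r,p)$, and acceptance ratio $A(w;r)=(r)_w/(\ceiling r)_w$, and then read off the output law and the iteration count from the standard rejection-sampling bookkeeping. One point worth flagging: your final step gives the expected number of iterations as $1/q=(1-p)^{r-\ceiling r}$, whereas the lemma (and the paper's own proof) record $(1-p)^{\ceiling r-r}$; your value is the right one, since the expected number of iterations of a rejection sampler must be at least $1$, so the exponent in the stated lemma appears to carry a sign slip.
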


\begin{proof}
This is a rejection sampler \cite{robert1999monte} with a proposal distribution $\nbdist(\ceiling{r},p)$, a target distribution $\nbdist(r,p)$, and a constant $k$ chosen such that $k\nbdist(x; \ceiling{r}, p) \ge \nbdist(x; r,p)$, for every $x\in\NNInts$.
Choosing $k = (1-p)^{r-\ceiling{r}}$, the probability that a proposed sample $W \dist \nbdist(\ceiling{r},p)$ is accepted is
\[
A(W; r) = \frac{ \nbdist(W; r,p) }{k \nbdist(W; \ceiling{r},p) }
	= \frac{(r)_{W}}{ (\ceiling{r})_{W} }
	.
\]
The expected number of rejected samples $R$ is geometrically distributed with mean $\frac{1-1/k}{1/k}$, and the expected number of iterations $R+1$ has mean $1/k$.
\end{proof}

In analogy to the work on \emph{Bernoulli factories} \cites{keane1994bernoulli,nacu2005fast,latuszynski2011}, where one wants to simulate $f(p)$-coins (for some function $f$) from $p$-coins when $p$ is unknown, we call \cref{alg:fractionalnb} a \emph{negative binomial factory} and write $$X \given Z_1, Z_2,\dotsc \dist \nbfactory( r,  Z_1, Z_2, \dotsc )$$ to denote that $X$ is simulated from a negative binomial factory with parameter $r$ and input sequence $Z_1, Z_2, \dotsc$.
We generalize \cref{result:intro_arbH} to parameters $r>0$ using random variables from a negative binomial factory in the following algorithm that slightly alters the negative binomial urn scheme.
Let $r>0$ and let $\gprocess{Y_{n,m}}{n,m\in\Nats}$ be an array of simple point processes on $\borelspace$. 
For every $n\in \Nats$,
\begin{enumerate}[leftmargin=*]
\item Define
\[
\Yscr_n 
	\defas \bigcup_{\ell\le \ceiling{r}} \supp(Y_{n,\ell})
	,
\]

\item Put $\Fcal_n \defas \sigma( Y_{n,1}, Y_{n,2}, \dotsc )$, and let $\gprocess{\bar X_{n,j}}{j\in\Nats}$ be a collection of random variables, conditionally independent given $\Fcal_n$, and 
\[
\bar X_{n,j} \given \Fcal_n
	\dist \nbfactory( r, Y_{n,1}\theset{\gamma_{n,j}}, Y_{n,2}\theset{\gamma_{n,j}}, \dotsc )
	,
	\qquad
	j\in \Nats
	.
\]

\item Define a sequence $\gprocess{X}{n\in\Nats}$ of point processes on $\borelspace$ where, for every $j\in\Nats$,
\[
X_n\theset{ \gamma_{n,j} } \defas \bar X_{n,j}
	,
	\quad
	\text{on the event } \{ j\le \kappa_n \}
	,
\]
and $X_n\theset{\gamma_{n,j}} \defas 0$, otherwise.
For every $A\in\bsa$, put $X_n (A) \defas \sum_{s\in A \cap \Yscr_n} X_n \theset s$.
\end{enumerate}

\begin{thm} \label{result:fracNBUS}
When $r>0$, \cref{result:intro_arbH} holds with this construction for $\process X \Nats$.
\end{thm}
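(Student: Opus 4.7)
The proof follows the skeleton of \cref{result:intro_arbH}. Conditional independence of $X_1, X_2, \dotsc$ given $B$ is immediate since each $X_n$ is a measurable function of $\gprocess{Y_{n,m}}{m\in\Nats}$ together with the independent uniforms $G_k$ used internally by the factories, and all these inputs are mutually independent across $n$. By the same disintegration argument as in the integer-$r$ case, it suffices to fix a deterministic base measure $B_0 = \tilde B_0 + \sum_{s\in\Hscr_0} b_s \delta_s \in \BMspace$, assume $Y_{n,m} \distiid \BEPLAW(B_0)$, and verify that the Laplace functional of $X_n$ agrees with \cref{result:cf_nbp}. Writing $\tilde X_n \defas X_n(\,\cdot \setminus \Hscr_0)$ for the restriction to the ordinary atoms, independence of the fixed and ordinary components of each $Y_{n,m}$ propagates to $X_n$, so the two Laplace transforms factor.

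The fixed component is handled exactly as in \cref{result:intro_arbH}, with \cref{result:fractionalnb} replacing the integer-$r$ construction: for each $s\in\Hscr_0$ the coin sequence $\gprocess{Y_{n,m}\theset s}{m\in\Nats}$ is \iid\ $\berndist(b_s)$ (and these sequences are independent across $s$), so \cref{result:fractionalnb} yields $X_n\theset s \dist \nbdist(r,b_s)$, independently over $s\in\Hscr_0$. This reproduces the product over $\Hscr_0$ appearing in \cref{result:cf_nbp}.

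The step I expect to be the main obstacle is the ordinary component, where \cref{result:fractionalnb} cannot be invoked as a black box because the input ``coins'' are degenerate rather than \iid\ Bernoulli variates. Since $\tilde B_0$ is diffuse, the Poisson processes $\tilde Y_{n,1}, \dotsc, \tilde Y_{n,\ceiling{r}}$ have pairwise disjoint supports \as, and every ordinary atom $\gamma_{n,j}\in \Yscr_n \setminus \Hscr_0$ therefore belongs to the support of exactly one $\tilde Y_{n,\ell_0}$ with $\ell_0 \le \ceiling{r}$. The associated coin sequence $\gprocess{Y_{n,m}\theset{\gamma_{n,j}}}{m\in\Nats}$ fed to the factory is then \as\ zero everywhere except at the single position $m=\ell_0$. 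Tracing through \cref{alg:fractionalnb} on such a deterministic sequence: iteration one consumes positions $1,\dotsc,\ceiling{r}+1$ and proposes $W_1=1$, which is accepted with probability $A(1;r)=r/\ceiling{r}$ via the independent uniform $G_1$; upon rejection, iteration two consumes the next $\ceiling{r}$ coins, which are all zero, so it proposes $W_2=0$ and accepts with probability one. Hence $\bar X_{n,j} \dist \berndist(r/\ceiling{r})$, and independence across $j$ follows from independence of the uniforms across factories.

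Assembling the pieces, $\tilde X_n$ is an independent $(r/\ceiling{r})$-thinning of $\sum_{\ell \le \ceiling{r}} \tilde Y_{n,\ell}$, a Poisson process with intensity $\ceiling{r}\tilde B_0$, and is therefore itself Poisson with intensity $r\tilde B_0$, matching the ordinary-component factor in \cref{result:cf_nbp}. Multiplying the fixed-component and ordinary-component Laplace transforms gives the Laplace functional of $\NBPLAW(r,B_0)$, and the disintegration argument from the proof of \cref{result:intro_arbH} then lifts the identity to the randomization over $B$, completing the proof.
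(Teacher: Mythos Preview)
Your proof is correct and follows essentially the same route as the paper: defer to the integer-$r$ argument for the overall structure and the fixed component (invoking \cref{result:fractionalnb} in place of \cref{result:nb_construct}), then analyze the factory by hand on the degenerate all-but-one-zero coin sequences arising at ordinary atoms to obtain an independent $r/\ceiling{r}$-thinning of a Poisson process with intensity $\ceiling{r}\tilde B_0$. If anything, you are slightly more explicit than the paper about the role of the independent uniforms $G_k$ in justifying the thinning and about the deterministic acceptance at iteration two.
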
 

\begin{proof}
Fix $n\in\Nats$.
The proof parallels that for \cref{result:intro_arbH}, except that the construction of the negative binomial variate in \cref{eq:nbfixed} is now given by \cref{alg:fractionalnb} and \cref{result:fractionalnb}.
This verifies the form of the fixed component of $X_n$, however, verifying the form of the ordinary component differs slightly.

Recall the definition of the non-random measure $H_0$ in $\BMspace$ with set of atoms $\Hscr_0$ and non-atomic part $\tilde H_0 \defas H_0 ( \cdot \setminus \Hscr_0 )$.
We must show that the ordinary component $\tilde X_n \defas X_n ( \cdot \setminus \Hscr_0 )$ of $X_n$ is still a Poisson process with intensity measure $r\tilde H_0$.

Recall the definitions of ${\tilde Y_{n,m} \defas Y_{n,m}( \cdot \setminus \Hscr_0 )}$, for every $m\in\Nats$, which are independent Poisson processes on $\bspace$ with intensity measure $\tilde H_0$.
We have that ${\tilde Y_n \defas \sum_{\ell \le \ceiling{r}} \tilde Y_{n,\ell}}$ is a Poisson process on $\bspace$ with intensity measure $\ceiling{r} \tilde H_0$, and by construction $\supp(\tilde X_n) \subseteq \supp(\tilde Y_n)$.
Because ${\cap_{m\in\Nats} \supp(\tilde Y_{n,m}) = \emptyset}$ \as, then for every $s\in \supp(\tilde Y_n)$, the sequence
$
\tilde Y_{n,1} \theset s ,
	\tilde Y_{n,2} \theset s ,
		\dotsc
$
satisfies
\[
\sum_{\ell \le \ceiling{r}} \tilde Y_{n,\ell}\theset s = 1 \quad \as
	,
\]
and
\[
\tilde Y_{n,m} \theset s = 0 \quad \as,
	\qquad \text{for every } m \ge \ceiling{r}+1
	; 
\]
that is, only one entry in the sequence is equal to one \as, which must occur within the first $\ceiling{r}$ entries.
Therefore, independently for every ${s\in \supp(\tilde Y_n)}$, executing step 1 of \cref{alg:fractionalnb} with input sequence $\tilde Y_{n,1}\theset s, \tilde Y_{n,2} \theset s, \dotsc$ will compute
\[
W_1 = 1 \quad \as\ \quad \text{and} \quad W_k = 0 \quad \as,
	\quad \text{for every } k\ge 2
	.
\]
The algorithm therefore outputs $\bar X_{n,j} = 1$ on its first iteration with probability
\[
A(W_1;r) = A(1;r) = \frac{(r)_1}{(\ceiling{r})_1}
	= \frac r {\ceiling{r}}
	,
\]
otherwise it outputs $\bar X_{n,j}=0$ \as\
It follows that $\tilde X_n$ is \as\ simple and, by a Poisson process thinning argument, we have that $\tilde X_n$ is a Poisson process with intensity measure $A(1;r) \ceiling{r} \tilde H_0 = r \tilde H_0$, as desired.
\end{proof}

\appendix

\section{Decompositions of negative binomial distributions}
\label{sec:decomp_nb}

Here we present several basic results on the negative binomial distribution.
The following classic result can be shown using moment generating functions:
\begin{prop}[Sums of negative binomial random variables] \label{result:sum_NB}
Let $(r_i)_{i=1}^n$ be a sequence of positive real numbers and let $p \in (0,1]$.
Let $Z_1, \dotsc, Z_n$ be a collection of independent random variables with
\[
Z_i \dist \nbdist ( r_i, p )
	,
	\quad
	i \le n
	.
\]
Then $\sum_{i=1}^n Z_i \dist \nbdist ( \bar r, p )$, where $\bar r = \sum_{i=1}^n r_i$.  \qed
\end{prop}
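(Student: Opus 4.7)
The plan is to proceed by the standard generating-function argument hinted at in the statement. Concretely, I will compute the probability generating function (PGF) of $\nbdist(r,p)$ in closed form, observe that it factorizes nicely, and then use independence together with the uniqueness theorem for PGFs.

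First I would compute, for $Z \dist \nbdist(r,p)$ with the rising-factorial p.m.f.\ given earlier, the PGF
\[
G_Z(z) \defas \EE[z^Z] = (1-p)^r \sum_{k=0}^\infty \frac{(r)_k}{k!} (pz)^k
	= \Bigl ( \frac{1-p}{1-pz} \Bigr )^r
	,
\]
valid for $|z| < 1/p$. The key step here is the generalized binomial identity $\sum_{k\ge 0} \frac{(r)_k}{k!} x^k = (1-x)^{-r}$ for $|x|<1$, which justifies the closed form and also shows that the p.m.f.\ is properly normalized (the case $p=1$ is handled trivially since then $Z=0$ \as).

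Next, by independence of $Z_1,\dotsc,Z_n$, the PGF of the sum factorizes as
\[
G_{\sum_i Z_i}(z) = \prod_{i=1}^n G_{Z_i}(z)
	= \prod_{i=1}^n \Bigl ( \frac{1-p}{1-pz} \Bigr )^{r_i}
	= \Bigl ( \frac{1-p}{1-pz} \Bigr )^{\bar r}
	,
\]
where $\bar r = \sum_i r_i$. Since this coincides with the PGF of a $\nbdist(\bar r, p)$ random variable, the uniqueness of PGFs on a neighborhood of the origin (equivalently of the characteristic function, if one prefers to substitute $z = e^{it}$) yields $\sum_i Z_i \dist \nbdist(\bar r, p)$.

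I do not anticipate any real obstacle here: the only non-trivial ingredient is the generalized binomial series above, and everything else is routine manipulation of products. If one wished to avoid generating functions entirely, one could alternatively induct on $n$ and verify the case $n=2$ directly by convolving p.m.f.'s using the Chu--Vandermonde identity $\sum_{k=0}^m \frac{(r_1)_k (r_2)_{m-k}}{k!\,(m-k)!} = \frac{(r_1+r_2)_m}{m!}$, but the PGF route is cleaner.
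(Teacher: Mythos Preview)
Your argument is correct and is essentially the approach the paper indicates: the paper does not spell out a proof but simply states that the result ``can be shown using moment generating functions,'' which is exactly your PGF computation up to a change of variable. Nothing further is needed.
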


For $r\in \Nats$, the negative binomial distribution has an interpretation as describing the number of successes before $r$ failures in a sequence of independent Bernoulli trials, with the probability of success in each trial equal to $p\in (0,1)$:

\begin{lem} \label{result:nb_construct}
Let $r \in \Nats$, let $\process Z \Nats$ be an \iid\ sequence of Bernoulli random variables with success probability $p\in (0,1)$, and let $W_r$ be the random variable in $\NNInts$ given by
\[
W_r \defas \inf
	\Bigl \{
		m\in\NNInts \colon m = \sum_{j=1}^{m+r} Z_j
	\Bigr \}
	.
\]
Then $W_r \dist \nbdist ( r, p )$.
\end{lem}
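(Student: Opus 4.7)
The plan is to identify $W_r$ with the classical waiting-time variable that counts successes before the $r$-th failure in an i.i.d.\ Bernoulli sequence, and then read off its law by a standard counting argument.

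First I would rewrite the defining event in more suggestive terms. Letting $S_n \defas \sum_{j=1}^n Z_j$ be the running count of successes, the constraint $m = \sum_{j=1}^{m+r} Z_j$ is equivalent to $(m+r) - S_{m+r} = r$, i.e., the first $m+r$ trials contain exactly $r$ zeros. Let $T_r$ be the (random) index of the $r$-th zero in the sequence, with $T_r \defas \infty$ if there are fewer than $r$ zeros. Since $p<1$, zeros occur infinitely often a.s.\ (by the second Borel--Cantelli lemma applied to the independent events $\{Z_j = 0\}$), so $T_r < \infty$ almost surely. The failure count $n \mapsto n - S_n$ is nondecreasing and first hits $r$ precisely at $n = T_r$, so the minimal $m$ with exactly $r$ zeros in the first $m+r$ trials is $m = T_r - r$. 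Hence $W_r = T_r - r$ a.s., which is the usual ``number of successes before the $r$-th failure.''

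Next I would compute the p.m.f.\ directly. For any $k \in \NNInts$, the event $\{W_r = k\}$ coincides with $\{T_r = k + r\}$, which occurs iff the $(k+r)$-th trial is a zero and the preceding $k+r-1$ trials contain exactly $k$ ones and $r-1$ zeros in some order. By independence, each such ordering contributes $p^k (1-p)^{r-1} \cdot (1-p) = p^k (1-p)^r$, and there are $\binom{k+r-1}{k}$ orderings. Therefore
\[
\Pr\{W_r = k\} = \binom{k+r-1}{k}\, p^k (1-p)^r = \frac{(r)_k}{k!}\, p^k (1-p)^r,
\]
which is exactly the $\nbdist(r,p)$ p.m.f.\ stated in the paper.

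The only point requiring any care, and hence the main (mild) obstacle, is checking that the infimum defining $W_r$ is attained and equals $T_r - r$; once this reduction is made, the remainder is a one-line combinatorial computation. An alternative I would keep in reserve is induction on $r$: the base case $r=1$ gives $W_1 \sim \geodist(p)$ from the waiting-time description, and the inductive step uses the strong Markov property at the first zero together with \cref{result:sum_NB} to conclude $W_r \sim \nbdist(r,p)$. Either route yields the statement.
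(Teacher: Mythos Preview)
Your argument is correct. The identification $W_r = T_r - r$ is justified exactly as you say: the failure count $n\mapsto n-S_n$ is nondecreasing with unit increments, so $\{n : n - S_n = r\}$ is a nonempty (a.s.) interval beginning at $T_r \ge r$, and the infimum over $m$ of $\{m+r : F_{m+r} = r\}$ gives $m = T_r - r \in \NNInts$. The p.m.f.\ computation is then the standard waiting-time count, and your identity $\binom{k+r-1}{k} = (r)_k/k!$ matches the paper's parameterization.

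The paper takes a different route, namely the inductive one you sketch at the end: it verifies $W_1 \sim \geodist(p)$ directly, then writes $W_r = W_{r-1} + W'_{r-1}$ where $W'_{r-1}$ is the geometric wait after the $(r-1)$-st failure, and invokes \cref{result:sum_NB} to conclude. Your direct combinatorial computation is more self-contained---it does not rely on \cref{result:sum_NB} at all---and arguably cleaner for this lemma in isolation. The paper's induction, on the other hand, mirrors the recursive structure of the urn scheme in \cref{eq:NBUS_construct} and makes explicit the decomposition of a $\nbdist(r,p)$ variate into a sum of $r$ independent geometrics, which is the intuition underlying \cref{result:intro_arbH}. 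Either proof is perfectly adequate here.
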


\begin{proof}
First consider when $r=1$, so that
\[
\Pr \{ W_1 = k \}
	&= \Pr \Bigl \{ \prod_{j=1}^k Z_j = 1 \wedge Z_{k+1} = 0 \Bigr \}
	,
	\qquad k\in\NNInts
	.
\]
Because the $\process Z \Nats$ are \iid, it follows that
\[
\Pr \{ W_1 = k \} = p^k (1-p) ,
	\qquad k\in\NNInts
	,
\]
which is the \pmf\ of the geometric distribution with parameter $p$.
The remainder of the proof follows by induction.  Assume $W_{r-1} \dist \nbdist(r-1, p)$.  Put
\[
W_{r-1}' \defas \inf \Bigl \{
		m\in \NNInts \colon
			m = \sum_{j=W_{r-1}+r}^{m+r} Z_j
	\Bigr \}
	,
\]
so that $W_r = W_{r-1} + W_{r-1}'$ \as\
By the same argument as above, we have that $W_{r-1}' \dist \geodist(p)$, and by \cref{result:sum_NB}, the result follows.
\end{proof}

\section*{Acknowledgements}

We thank Krzysztof {\L}atuszy{\'n}ski for helpful discussions.

\bibliography{nb-process}

\end{document}